\documentclass{amsart}
\pdfoutput=1

\usepackage[utf8]{inputenc}
\usepackage{amsfonts,mathtools,amsmath,amsthm,amssymb,tikz,color,enumitem,latexsym,tikz-cd,mathrsfs,graphicx,verbatim,MnSymbol}
\usepackage[breaklinks]{hyperref}

\usepackage[backref=true]{biblatex}
\bibliography{sources.bib}
\allowdisplaybreaks

\hypersetup{
    colorlinks,
    hyperfootnotes=true,
    linkcolor={red!50!black},
    citecolor={blue!50!black},
    urlcolor={blue!80!black}
}

\DefineBibliographyStrings{english}{%
	backrefpage = {page},%
	backrefpages = {pages},%
}

\usetikzlibrary{arrows}

\tikzset{slopearrow/.style={sloped, anchor=south}}
  
\newtheorem{lem}{Lemma}[section]

\newtheorem{thm}[lem]{Theorem}
\newtheorem{prop}[lem]{Proposition}

\newtheorem{cor}[lem]{Corollary}
\newtheorem{defn}[lem]{Definition}

\theoremstyle{definition}

\newtheorem{rmk}[lem]{Remark}

\def\CC{\mathbb C}

\def\QQ{\mathbb Q}
\def\PP{\mathbb P}

\def\bbA{\mathbb A}

\def\scO{\mathscr O}

\def\scP{\mathscr P}
\def\scE{\mathscr E}
\def\scF{\mathscr F}
\def\scZ{\mathcal Z}
\def\caP{\mathcal P}
\def\caQ{\mathcal Q}

\def\caT{\mathcal T}

\def\zaropen{\mathcal{U}}
\def\numfield{k}

\DeclareMathOperator\CH{CH}

\DeclareMathOperator\rank{rank}
\DeclareMathOperator\Gal{Gal}

\DeclareMathOperator\chr{char}
\DeclareMathOperator\Sym{Sym}

\def\S{S}
\def\s{s}
\def\T{T}
\def\t{t}
\def\r{r}
\def\one{_1}
\def\two{_2}
\def\badred{R}
\DeclareMathOperator\val{val}

\title{Rational Equivalences on Products of Elliptic Curves in a Family}
\author{Jonathan Love}
\address{Stanford University, Dept.\ of Mathematics}
\email{jonlove@stanford.edu}
\thanks{Supported by NSF grant \#1701567}
\date{September 2020}

\begin{document}

\begin{abstract}
	Given a pair of elliptic curves $E\one,E\two$ over a field $\numfield$, we have a natural map $\CH^1(E\one)_0\otimes\CH^1(E\two)_0\to\CH^2(E\one\times E\two)$, and a conjecture due to Bloch and Beilinson predicts that the image of this map is finite when $\numfield$ is a number field. We construct a $2$-parameter family of elliptic curves that can be used to produce examples of pairs $E\one,E\two$ where this image is finite. The family is constructed to guarantee the existence of a rational curve passing through a specified point in the Kummer surface of $E\one\times E\two$.
\end{abstract}

\maketitle

\section{Introduction}

Given a smooth irreducible projective variety $X$ over a field $\numfield$, let $\CH^r(X)$ denote the Chow group of cycles of codimension $r$ on $X$ modulo rational equivalence (see for example \cite{fulton}). If $X$ has dimension $d$, let $\CH^d(X)_0$ denote the subgroup of $\CH^d(X)$ consisting of zero-cycles of degree $0$.

If $E\one$ and $E\two$ are elliptic curves over $\numfield$, we have an Abel-Jacobi map
\begin{align*}
\text{AJ}:\CH^2(E\one\times E\two) &\to (E\one\times E\two)(\numfield)\\
\sum a_i[R_i]&\mapsto \sum a_iR_i.
\end{align*} 
(Some care is needed if the points $R_i$ are not defined over $\numfield$; see Section~\ref{context}.) A celebrated conjecture independently due to Bloch~\cite{blochconj} and Beilinson~\cite{beilinsonconj} predicts that $\ker\text{AJ}$ is finite when $\numfield$ is a number field. To this date, there is very little concrete evidence for this conjecture. See Section~\ref{context} for further discussion.

We will prove an implication of this conjecture for a family of curves. Consider the following map:
\begin{align*}
\Phi:\CH^1(E\one)\otimes \CH^1(E\two) &\to  \CH^2(E\one\times E\two)\\
[P\one]\otimes [P\two] &\mapsto [(P\one,P\two)].
\end{align*}
In terms of the projection maps $\pi_i:E\one\times E\two\to E_i$ and the intersection product on $E\one\times E\two$, we can equivalently write $\Phi(D\one\otimes D\two)=\pi\one^*(D\one)\cdot \pi\two^*(D\two)$. Within the domain of this map is the subgroup $\CH^1(E\one)_0\otimes \CH^1(E\two)_0$, which is isomorphic to $E\one(\numfield)\otimes E\two(\numfield)$ and is therefore infinite when $E\one$ and $E\two$ both have positive rank. We can check (see Section~\ref{context}) that $\Phi$ maps this subgroup into the kernel of $\text{AJ}$, and so Beilinson's conjecture predicts that the image of this subgroup should be finite. We summarize this situation with the following definition.
\begin{defn}
	We say that the product $E\one\times E\two$ is \emph{clean} if the image under $\Phi$ of $\CH^1(E\one)_0\otimes \CH^1(E\two)_0$ is finite. In this case we also say that $E\one$ and $E\two$ form a \emph{clean pair}.
\end{defn}
\noindent
In this language, Bloch and Beilinson's conjecture implies that all pairs of elliptic curves over a number field are clean.

We will construct a family of elliptic curves which can be used to produce nontrivial (i.e. positive rank) clean pairs. Let $\scE$ be the elliptic curve over $\numfield(\S ,\T)$ given by
\[\scE:y^2=x^3-3\T ^2x+2\T ^3+\left(1-\S-3\T\right)^2 \S,\]
and let $E_{\s ,\t}$ denote the specialization of $\scE$ obtained by substituting $\s,\t\in\numfield$ for the indeterminates $\S$ and $\T$.

\begin{thm}\label{cleanthm}
	Let $\numfield$ be an infinite field with $\chr\numfield\neq 2,3$, and assume that $|E(\numfield)_{\text{tors}}|$ is uniformly bounded for all elliptic curves $E$ over $\numfield$. There is a nonempty Zariski-open subset $\zaropen$ of $\bbA_\numfield^2$ such that for all $(\s,\t\one),\,(\s,\t\two)\in\zaropen(\numfield)$, if $E_{\s,\t\one}$ and $E_{\s,\t \two}$ are rank $1$ elliptic curves, then $E_{\s ,\t\one}\times E_{\s ,\t \two}$ is clean.
\end{thm}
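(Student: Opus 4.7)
I plan to combine an explicit construction of a rational curve on the Kummer surface with a rank-$1$ finiteness argument.

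First, I observe that $x^3 - 3\t^2 x + 2\t^3 = (x-\t)^2(x+2\t)$, so the equation of $E_{\s,\t}$ can be rewritten as $y^2 = (x-\t)^2(x+2\t) + (1-\s-3\t)^2\s$. This reveals the distinguished point $P_{\s,\t} := (\t,\,(1-\s-3\t)\sqrt{\s})$ lying on $E_{\s,\t}$ over $\numfield(\sqrt{\s})$. The tangent line at $P_{\s,\t}$ is horizontal (the $x$-derivative of the cubic factor vanishes to order $2$ at $x=\t$), giving the explicit doubling formula $2P_{\s,\t} = (-2\t,\,-(1-\s-3\t)\sqrt{\s})$. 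I take $\zaropen$ to be the complement of the loci where $E_{\s,\t}$ is singular, where $P_{\s,\t}$ is torsion, or where the curve constructed in the next step degenerates.

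The central step, and the main obstacle, is to construct for each $(\s,\t\one),(\s,\t\two)\in\zaropen(\numfield)$ an explicit morphism $\phi\colon \PP^1 \to K$ defined over $\numfield$ whose image is a rational curve on the Kummer surface $K := K(E_{\s,\t\one}\times E_{\s,\t\two})$ passing through the image of $(P\one,P\two) := (P_{\s,\t\one},P_{\s,\t\two})$. Note this Kummer image is $\numfield$-rational even when $P_i\notin E_i(\numfield)$, since the Galois conjugate $(-P\one,-P\two)$ is identified with $(P\one,P\two)$ under the $\{\pm 1\}$-quotient. The construction should exploit the shared appearance of $\sqrt{\s}$ in both $P\one$ and $P\two$: parametrizing both $x$-coordinates through a single variable $u$ in a way that keeps the $y$-coordinates rational should link the two Weierstrass equations. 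The specific form of the additive term $(1-\S-3\T)^2\S$ in the family $\scE$ is presumably engineered so this parametrization exists in closed form, and explicitly producing it is the heart of the argument.

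Granted such a $\phi$, I pull it back through the double cover $\pi\colon E\one\times E\two\to K$ and push forward a principal divisor on $\PP^1$ to obtain, in $\CH^2(E\one\times E\two)$, an equivalence of the shape
\[
[(P\one,P\two)] + [(-P\one,-P\two)] \;\sim\; Z,
\]
where $E_i := E_{\s,\t_i}$ and $Z$ is a $0$-cycle supported on $2$-torsion points (coming from the branch points of the induced double cover $\pi^{-1}(\phi(\PP^1))\to\PP^1$). Expanding $[-P_i] = 2[O_i] - [P_i]$ in $\CH^1(E_i)_0$ on the left, and noting that $Z - 2[(O\one,O\two)]$ lies in the image under $\Phi$ of $2$-torsion tensor $2$-torsion (hence annihilated by a small constant), one deduces that $\Phi\bigl(([P\one]-[O\one])\otimes([P\two]-[O\two])\bigr)$ has bounded finite order in $\CH^2(E\one\times E\two)$.

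Finally, suppose first that $\sqrt{\s}\in\numfield$, so $P_i\in E_i(\numfield)$. Under the rank-$1$ hypothesis, writing $P_i = n_iR_i + \tau_i$ for a generator $R_i$ of $E_i(\numfield)/\text{tors}$ and torsion $\tau_i$, the bounded torsion of $\Phi([P\one]\otimes [P\two])$ (modulo the torsion contributions of $\tau_i$, which are themselves of bounded order by the uniform hypothesis) forces $\Phi([R\one]\otimes[R\two])$ to be torsion of bounded order. Since $E\one(\numfield)\otimes E\two(\numfield)$ is finitely generated (rank $1$ plus bounded torsion), its image under $\Phi$ is a finite group. When $\s$ is not a square, the same equivalence holds over $\numfield(\sqrt{\s})$, and a Galois descent argument using that the kernel of $\CH^2((E\one\times E\two)_\numfield)\to\CH^2((E\one\times E\two)_{\numfield(\sqrt{\s})})$ is $2$-torsion transfers the finiteness conclusion back to $\numfield$.
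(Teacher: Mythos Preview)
Your proposal has a genuine gap at exactly the place you yourself flag: you never construct the rational curve on the Kummer surface. Saying ``explicitly producing it is the heart of the argument'' and then moving on is not a proof sketch; it is a statement of what remains to be done. Everything downstream of that step (the pullback, the rank-$1$ reduction, the Galois descent) is routine once the curve exists, so the proposal as written contains essentially none of the content of the theorem.

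The paper's mechanism for producing the rational curve is quite different from the vague ``parametrize both $x$-coordinates through a single variable'' idea you suggest. Write $E_{\s,\t_i}:y_i^2=f_i(x_i)$ and consider Inose's pencil on the Kummer: the family of cubics $C_r:f_1(x_1)=r^2 f_2(x_2)$ fibering $\widehat{K}$ over $\PP^1$ via $(x_1,x_2,r)\mapsto r$. A fiber $C_r$ is singular, hence rational, precisely when it passes through a point $(\t_1,\t_2,r)$ with $f_1'(\t_1)=f_2'(\t_2)=0$. Your own factorization $x^3-3\t^2x+2\t^3=(x-\t)^2(x+2\t)$ shows $f_i'(\t_i)=0$ automatically. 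So the only issue is whether the particular fiber through the image of $(P_1,P_2)$ --- namely $C_r$ with $r=y_1(P_1)/y_2(P_2)$ --- also hits $(\t_1,\t_2)$, i.e.\ whether $f_1(\t_1)/y_1(P_1)^2=f_2(\t_2)/y_2(P_2)^2$. The family is engineered so that both sides equal $\s$; that is the whole point of the specific form of $\scE$.

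A second, avoidable complication: you chose the point $(\t,(1-\s-3\t)\sqrt{\s})$, which lives only over $\numfield(\sqrt{\s})$ and forces your Galois descent epilogue. The paper instead uses the $\numfield$-rational point $\scP_{\s,\t}=(1-\s-2\t,\,1-\s-3\t)$ (in fact $\scP=-2\caQ$ where $\caQ$ is your point). With $\scP$ one checks directly that $f_{\s,\t}(\t)/y(\scP_{\s,\t})^2=\s$, so the singular-fiber argument runs entirely over $\numfield$ and the proof of cleanness reduces to the elementary computation in your final paragraph, with no descent needed and with $Z=2[(O_1,O_2)]$ rather than a cycle supported on general $2$-torsion.
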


\begin{rmk}
	The curve $\scE$ has a specified rational point
	\[\scP:=(1-S-2T,\,1-S-3T)\in \scE(\numfield(\S ,\T)).\]
	The Zariski-open subset $\zaropen$ of Theorem~\ref{cleanthm} is the locus where $\scE$ has good reduction and the reduction of $\scP$ has infinite order. So for all $(\s,\t)\in\zaropen(\numfield)$, $E_{\s,\t}$ is a positive rank elliptic curve.
\end{rmk}

The proof of Theorem~\ref{cleanthm} will be given in Sections~\ref{kummerrational} and~\ref{parametrization}. When $\numfield$ is a number field, Merel proved that the torsion subgroup of $E(\numfield)$ is uniformly bounded~\cite{merel}, so we obtain the following corollary.
\begin{cor}
	Let $\numfield$ be a number field. There is a nonempty Zariski-open subset $\zaropen$ of $\bbA_\numfield^2$ such that for all $(\s,\t\one),\,(\s,\t\two)\in\zaropen(\numfield)$, if $E_{\s,\t\one}$ and $E_{\s,\t \two}$ are rank $1$ elliptic curves, then $E_{\s ,\t\one}\times E_{\s ,\t \two}$ is clean.
\end{cor}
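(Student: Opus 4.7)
The plan is simply to verify that the three hypotheses of Theorem~\ref{cleanthm} all hold when $\numfield$ is a number field, and then to take $\zaropen$ to be exactly the Zariski-open subset produced by that theorem. There is no new construction to carry out; the corollary records the particular instance of Theorem~\ref{cleanthm} in which the uniform torsion hypothesis is an unconditional theorem rather than an assumption.

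The first two hypotheses are immediate: every number field contains $\QQ$ and is therefore infinite, and every number field has characteristic $0$, so in particular $\chr\numfield\notin\{2,3\}$. The remaining hypothesis is the uniform bound on $|E(\numfield)_{\text{tors}}|$ as $E$ ranges over all elliptic curves defined over $\numfield$. This is precisely Merel's theorem~\cite{merel}, which establishes the uniform boundedness conjecture for torsion over number fields by exhibiting a bound depending only on the degree $[\numfield:\QQ]$ and not on the particular curve $E$.

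With all three hypotheses verified, I would apply Theorem~\ref{cleanthm} to obtain a nonempty Zariski-open $\zaropen\subseteq\bbA_\numfield^2$ with the asserted property, and the conclusion that $E_{\s,\t\one}\times E_{\s,\t\two}$ is clean under the stated rank hypotheses follows directly from the theorem. The only substantive ingredient outside the paper is Merel's theorem; once that is cited, no genuine obstacle remains, which is why this derivation is recorded as a corollary rather than as a separate result.
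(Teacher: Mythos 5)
Your proposal is correct and matches the paper exactly: the paper derives this corollary from Theorem~\ref{cleanthm} by citing Merel's theorem for the uniform torsion bound, with the infinitude and characteristic-zero hypotheses being automatic for number fields. No further comment is needed.
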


For any nonzero $\s\in\numfield$, let $\scE_\s$ be the restriction of $\scE$ to $\S=\s$; that is, $\scE_\s$ is the curve over $\numfield(\T)$ defined by
\[\scE_\s:y^2=x^3-3\T ^2x+2\T ^3+\left(1-\s-3\T\right)^2 \s.\]
By Theorem~\ref{cleanthm}, any two rank $1$ specializations of $\scE_\s$ subject to a certain Zariski-open condition will form a clean pair. We would like to understand how many elliptic curves that satisfy these conditions. 

\begin{cor}\label{infwithE}
	Let $\numfield$ be an infinite field with $\chr\numfield\neq 2,3$, and assume that $\scE_\s$ does not have elevated rank for any nonzero $\s\in\numfield$. Let $E$ be any rank $1$ elliptic curve over $\numfield$ of the form $y^2=x^3-3\t ^2x+b$, such that there is no torsion point in $E(\numfield)$ with $x$-coordinate equal to $\t$. Then there is an infinite collection of elliptic curves $E'$ over $\numfield$, no two of which are isomorphic over $\overline{\numfield}$, such that $E\times E'$ is clean.
\end{cor}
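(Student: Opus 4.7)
The plan is to realize $E$ as a specialization $E_{s,\t\one}$ of $\scE$, then extract from the restricted family $\scE_s$ infinitely many other rank-$1$ specializations $E_{s,\t\two}$, and apply Theorem~\ref{cleanthm} to conclude each $E\times E_{s,\t\two}$ is clean. Non-isomorphism over $\overline{\numfield}$ will follow from examining $j$-invariants.

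For the parameterization step, since $E$ has rank~$1$ the set of non-torsion points in $E(\numfield)$ is infinite, while at most two points of $E(\numfield)$ share a given $x$-coordinate; thus we may choose a non-torsion $P=(x_0,y_0)\in E(\numfield)$ with $x_0\neq\t$. Put
\[
\lambda=(x_0-\t)/y_0\in\numfield^{\times},\quad \t\one=\lambda^2\t,\quad s=1-3\t\one-\lambda^3 y_0.
\]
A direct calculation using $y_0^2=x_0^3-3\t^2 x_0+b$ shows that the scaling $(x,y)\mapsto(\lambda^2 x,\lambda^3 y)$ defines a $\numfield$-isomorphism $E\xrightarrow{\sim} E_{s,\t\one}$ sending $P$ to the specialization $\scP_{s,\t\one}$. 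One simplifies to $s=(b-2\t^3)/y_0^2$, which is nonzero because $E$ is nonsingular; and since $P$ has infinite order so does its image $\scP_{s,\t\one}$, giving $(s,\t\one)\in\zaropen(\numfield)$.

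Next, view $\scE_s$ as an elliptic curve over $\numfield(\T)$: the section $\scP_s$ specializes at $\T=\t\one$ to the non-torsion point $P$, so $\scP_s$ itself is non-torsion and the generic rank is at least~$1$. Granting that the generic rank equals $1$, the no-elevated-rank hypothesis produces infinitely many $\t\two\in\numfield$ with $E_{s,\t\two}$ of rank exactly~$1$; after discarding the finitely many $\t\two$ with $(s,\t\two)\notin\zaropen$ (an open condition which holds at $\t\two=\t\one$), Theorem~\ref{cleanthm} applies to each remaining pair to give $E\times E_{s,\t\two}\cong E_{s,\t\one}\times E_{s,\t\two}$ clean. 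Because the $j$-invariant of $E_{s,\t\two}$ is a nonconstant rational function of $\t\two$ for fixed nonzero $s$, infinitely many of these curves have distinct $j$-invariants and are therefore pairwise non-isomorphic over $\overline{\numfield}$.

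The main obstacle is verifying that the generic rank of $\scE_s$ is exactly~$1$ rather than larger; this is presumably where the hypothesis that no torsion point of $E(\numfield)$ has $x$-coordinate $\t$ enters, perhaps by showing that an additional independent generic section of $\scE_s$ would, upon specialization at $\T=\t\one$, produce a point of $E(\numfield)$ whose existence is incompatible with the stated absence of torsion at $x$-coordinate $\t$.
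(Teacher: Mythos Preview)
Your overall structure matches the paper's proof, but the step you flag as ``the main obstacle'' is indeed a genuine gap, and your guess about how to fill it is not quite right. The generic rank of $\scE_s$ is \emph{not} always $1$: the paper computes (Proposition~\ref{genrank1}) that $\scE_s(\numfield(\T))$ has rank $1$ when $s=1$ or $s\notin\numfield^2$, but rank $2$ when $s\in\numfield^2\setminus\{0,1\}$. With your arbitrary choice of non-torsion $P$, the value $s=(b-2\t^3)/y_0^2$ can easily land in $\numfield^2\setminus\{0,1\}$, in which case the no-elevated-rank hypothesis combined with Silverman's specialization theorem gives infinitely many specializations of rank exactly $2$, not $1$, and Theorem~\ref{cleanthm} does not apply.

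The paper repairs this by choosing $P$ so that $s$ is forced to be $1$ or a non-square. If $b-2\t^3\notin\numfield^2$, any non-torsion $P$ works, since then $s=(b-2\t^3)/y(P)^2$ is automatically a non-square. If $b-2\t^3=r^2$ with $r\in\numfield$, one takes $P=(-2\t,r)$, which lies on $E$ and yields $s=1$. This is exactly where the torsion hypothesis enters: the point $(\t,r)$ is on $E$ (since $f(\t)=b-2\t^3=r^2$), and one checks $-2(\t,r)=(-2\t,r)$ because the tangent at $x=\t$ has slope zero. Hence $(-2\t,r)$ is torsion if and only if $(\t,r)$ is, and the hypothesis that no torsion point has $x$-coordinate $\t$ guarantees $P=(-2\t,r)$ has infinite order. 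So the hypothesis does not rule out an extra generic section directly; rather, it certifies that a specific choice of $P$ achieving $s=1$ is available.
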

Corollary~\ref{infwithE} will be proven in Section~\ref{infpairs}. The conditions on $E$ are to guarantee that $E$ is isomorphic to a specialization of $\scE_\s$ for an appropriate choice of $\s$.
\begin{rmk}
	For a definition of elevated rank, see Definition~\ref{defn:elevatedrank}. We discuss the elevated rank hypothesis of Corollary~\ref{infwithE} in Section~\ref{infpairs}. In particular, the assumption that $\scE_\s$ does not have elevated rank is used to conclude that $E_{s,t}$ has the same rank as $\scE_\s$ for infinitely many $t\in\numfield$. We are not able to prove this hypothesis for any $\scE_\s$, but it seems likely to always hold when $\numfield$ is a number field (Remark~\ref{rmk:noelevatedrank}).
\end{rmk}

Despite not being able to prove unconditionally that these collections are infinite, we can easily use these families to generate many clean pairs of curves, as will be discussed in Section~\ref{examples}. In particular, we compute a list of rank $1$ curves over $\zaropen(\QQ)$ with $\S=1$, from which we obtain approximately $7\cdot 10^8$ nontrivial clean pairs of rank $1$ curves. 

\subsection{Acknowledgments}

The author would like to thank Akshay Venkatesh for drawing his attention to this problem, for providing many potential strategies to try, and for pointing him to the prior work of Kartik Prasanna and Vasudevan Srinivas, which inspired his work on this problem. The specific strategy of looking for rational curves in the Kummer surface developed out of conversations with Ravi Vakil. Many thanks also to the anonymous reviewers for catching errors and providing suggestions to improve the exposition.  

\section{Context}\label{context}

Let $X$ be a surface over $\numfield$. The Chow group $\CH^2(X)$ depends quite strongly on the field $\numfield$; in general, $\CH^2(X)$ can be extremely unwieldy. This was first shown by Mumford, who proved that if $X$ is defined over $\numfield=\CC$ and has a nonzero holomorphic $2$-form (this includes for example $X=E\one\times E\two$), then $\CH^2(X)$ is ``infinite-dimensional;'' that is, for any positive integer $n$, if a subvariety of $\Sym^n(X)$ is sent to a single point under the map
\begin{align*}
\Sym^n(X)&\to \CH^2(X)\\
\{P_i\}&\mapsto \sum_i [P_i],
\end{align*}
then this subvariety must have codimension at least $n$~\cite[Corollary]{mumford}. 

When the field of definition is a number field, it is believed $\CH^2(X)$ is much more well-behaved. Letting $\text{Alb}(X)$ denote the Albanese variety of $X$ and fixing a base point $P_0\in X(\numfield)$,\footnote{For simplicity, we only consider the case that $X(\numfield)$ is nonempty.} we have a natural surjection $\text{AJ}:\CH^2(X)\to \text{Alb}(X)(\numfield)$, defined as follows. Given a closed point $R$ of $X$, its residue field $\numfield(R)$ is a finite extension of $\numfield$. If $\numfield(R)=\numfield$, simply define $\text{AJ}([R])=R-P_0$. Otherwise, $R$ splits over its residue field: $R\otimes_\numfield \numfield(R)=\{R^{(1)},\ldots,R^{(m)}\}$, where $R^{(1)},\ldots,R^{(m)}\in X(\numfield(R))$ are conjugate under $\Gal(\numfield(R)/\numfield)$. The sum $R^{(1)}+\cdots +R^{(m)}-mP_0\in\text{Alb}(X)(\numfield(R))$ is Galois-invariant, and hence descends to a point $\text{AJ}(R)\in \text{Alb}(X)(\numfield)$. The map $\text{AJ}$ can then be extended to $\CH^2(X)$ by linearity.

A far-reaching set of conjectures due to Beilinson~\cite{beilinsonconj} and Bloch~\cite{blochconj} imply that when $\numfield$ is a number field, $\CH^2(X)$ is finitely generated, with rank equal to the rank of $\text{Alb}(X)(\numfield)$ (this implication is described in the case $\numfield=\QQ$ in~\cite[Lemma 5.1]{Beilinson1987}). That is, $\text{AJ}$ is conjecturally an isomorphism modulo a finite kernel. These conjectures were made with little concrete evidence (as Beilinson notes immediately after~\cite[Conjecture 5.0]{Beilinson1987}), and since then there are still very few cases for which the conjecture is known to be true. Bloch gives examples of rational surfaces satisfying the conjecture~\cite[Chapter 7]{blochlect}, but the author is not aware of any non-rational surfaces that are known to satisfy the conjecture.

Now we return to the special case $X=E\one\times E\two$; note that $\text{Alb}(X)=X$ because $X$ is an abelian variety. The Chow group $\CH^2(E\one\times E\two)$ is generated by closed points of $E\one\times E\two$; these points may not be defined over $\numfield$, and one major difficulty in studying the Chow group comes from relations involving high-degree points. However, even the question of which cycles supported at $\numfield$-points are rationally equivalent to zero is not fully understood, and this is the question this paper addresses. All cycles of this form lie in the image of the map $\Phi$ defined above: given any $\numfield$-point $(P\one,P\two)\in (E\one\times E\two)(\numfield)$, the corresponding zero-cycle is $\Phi([P\one]\otimes [P\two])$. 

Within the domain of $\Phi$ is the subgroup $\CH^1(E\one)_0\otimes \CH^1(E\two)_0$ generated by elements of the form $([P\one]-[Q\one])\otimes ([P\two]-[Q\two])$ for $P\one,Q\one\in E\one(\numfield)$ and $P\two,Q\two\in E\two(\numfield)$, and we have
\begin{align*}
&(\text{AJ}\circ \Phi)(([P\one]-[Q\one])\otimes ([P\two]-[Q\two]))\\
&\qquad=(P\one,P\two)-(P\one,Q\two)-(Q\one,P\two)+(Q\one,Q\two)\\
&\qquad=0.
\end{align*}
Hence $\Phi(\CH^1(E\one)_0\otimes \CH^1(E\two)_0)$ is a subgroup of $\ker\text{AJ}$ and is therefore conjecturally finite; if this holds, we say that $E\one\times E\two$ is \emph{clean}. Intuitively, this says that given any relation among points in $(E\one\times E\two)(\numfield)$, some nonzero multiple of this relation can be expressed as a rational equivalence.

Prior to this work, Prasanna and Srinivas developed a technique using Heegner points on a modular curve to prove that certain pairs of rank $1$ curves are clean~\cite{prasanasrinivas}. Their technique requires $E\one$ and $E\two$ to have the same conductor, and must be applied on a case-by-case basis (their preprint proves cleanliness of two pairs of curves). Our contribution is to provide a two-parameter family of curves for which there is a simple test for clean pairs: if two curves are contained in a certain Zariski-open, have rank $1$, and share a common value for the first parameter, then the pair of curves is clean. Note that this makes modest progress towards providing evidence for Bloch and Beilinson's conjectures, but is still far from providing any example demonstrating the truth of the conjectures, because (as discussed above) $\ker\text{AJ}$ is generated by points of arbitrarily large degree over $\numfield$.

\section{A Pencil of Cubic Curves in the Kummer Surface}\label{kummerrational}

We henceforth assume $\text{char}\numfield\neq 2,3$, so every elliptic curve over $\numfield$ has a short Weierstrass form.

Let $E\one$ and $E\two$ be elliptic curves over $\numfield$, with respective identity points $O\one$ and $O\two$. The product $E\one\times E\two$ has an involution $\iota$ given by negation, which acts freely away from the $2$-torsion points of $E\one\times E\two$. We can form the quotient by $\iota$, called the \emph{Kummer surface} $K$ of $E\one\times E\two$ (see, for example, Section 10.3 of~\cite{dolgachev}), and we have a degree $2$ morphism $\pi:E\one\times E\two\to K$ with $\pi=\pi\circ\iota$. The Kummer surface has sixteen singularities, corresponding to the fixed points of $\iota$; blowing up these sixteen points gives a smooth surface $\widehat{K}$. Since $K$ and $\widehat{K}$ are birationally equivalent, $\pi$ induces a rational map $\widehat{\pi}:E\one\times E\two\dashedrightarrow \widehat{K}$, defined away from the fixed points of $\iota$.

Let $E\one$ and $E\two$ have Weierstrass forms $y\one^2=f(x\one)$ and $y\two^2=g(x\two)$ respectively. The hypersurface in $\bbA^3(x\one,x\two,\r)$ defined by
\[f(x\one)=\r^2g(x\two)\]
is an affine model for $\widehat{K}$, with the rational map $\widehat{\pi}$ given in these coordinates by $(x\one,\,y\one,\,x\two,\,y\two)\mapsto (x\one,\,x\two,\,y\one/y\two)$. The map
\begin{align*}
\widehat{K}&\to\PP^1\\
(x\one,\,x\two,\,\r)&\mapsto \r
\end{align*}
gives $\widehat{K}$ the structure of an elliptic surface; the fiber over a point $\r\in\PP^1(\numfield)$ is a cubic curve $C_\r$. This fibration is known as \emph{Inose's pencil}~\cite{shioda07}. In general, the fiber $C_\r$ will be a genus $1$ curve, but if $C_\r$ has a singularity then it will be a rational curve (or a union of rational curves). 

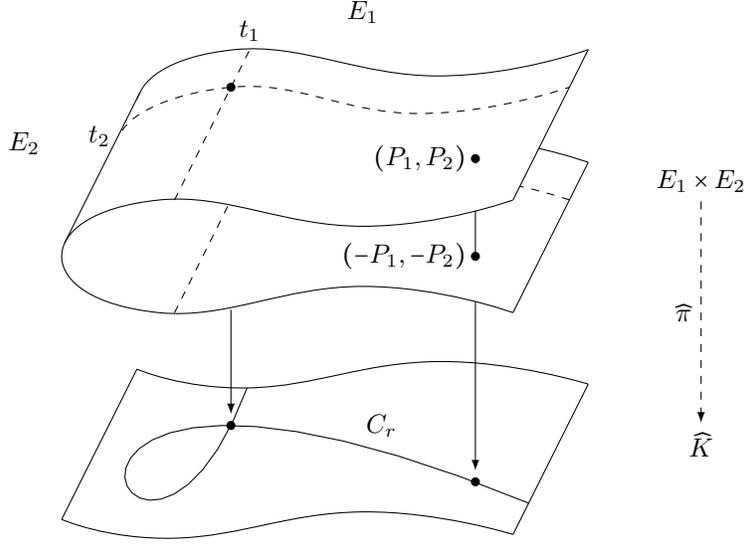
\begin{figure}
	\begin{center}
		\begin{tikzpicture}[domain=-1.665:1.13]
		
		\node at (4.5,3.25) {$E\one$};
		\node at (0,1.5) {$E\two$};
		\node (EE) at (9,1) {$E\one\times E\two$};
		\node (K) at (9,-2.5) {$\widehat{K}$};
		\draw[dashed, -latex] (EE) to node[anchor=east] {$\widehat{\pi}$} (K);

		\draw  plot[smooth, tension=.8,xshift=1cm,yshift=2cm] coordinates {(6.5,-.75) (4.5,-.4) (1.8,-.75) (0.5,0) (1.8,0.75) (4.5,.4) (6.5,.75)};
		\path[fill=white] (1.54,2.16) -- (6.5, 2) -- (7.5, 2.75) -- (6.5,.75) -- (0.5,0) -- cycle;

		\draw[dashed]  plot[smooth, tension=.8,xshift=0.75cm,yshift=1.5cm] coordinates {(6.5,-.75) (4.5,-.4) (2,-.75) (0.5,0) (2,0.75) (4.5,.4) (6.5,.75)};
		\path[fill=white] (1.29,1.66) -- (6.5, 1.66) -- (7.25, 2.25) -- (6.5,.75) -- (0.5,0) -- cycle;
		\node at (1,1.6) {$\t\two$};

		\draw (0.54,0.18) -- (1.54, 2.18);
		\draw (6.5,.75) -- (7.5, 2.75);
		\draw (6.5,-.75) -- (7.5, 1.25);

		\draw[dashed] (2,-.75) -- (3,1.25);
		\path[fill=white] (2,1) -- (3,1.25) -- (3.5,0.4) -- cycle;
		\draw[dashed] (2,.75) -- (3,2.75);
		\node at (3,3) {$\t\one$};

		\draw[-latex] (6,1.3) -- (6,-2.85);
		\path[fill=white] (6,1.3) -- (6.1,0.6) -- (5.9,0.61) -- cycle;
		\path[fill=white] (6,0) -- (6.1,-.6) -- (5.9,-0.61) -- cycle; 
		\filldraw[black] (6,1.3) circle (1.5pt) node[anchor=east] {$(P\one,P\two)$};
		\filldraw[black] (6,0) circle (1.5pt) node[anchor=east] {$(-P\one,-P\two)$};
		\filldraw[black] (6,-3) circle (1.5pt);

		\filldraw[black] (2.75,2.25) circle (1.5pt);
		\draw[-latex] (2.75,-.71) -- (2.75,-2.1);
		\filldraw[black] (2.75,-2.25) circle (1.5pt);

		\draw  plot[smooth, tension=.8] coordinates {(6.5,-.75) (4.5,-.4) (1.8,-.75) (0.5,0) (1.8,0.75) (4.5,.4) (6.5,.75)};

		\draw (1.5,-1.5) -- (0.5,-3.5);
		\draw (7.5,-1.7) -- (6.5,-3.7);
		\draw  plot[smooth, tension=.8, xshift=1cm, yshift=-1.5cm] coordinates {(6.5,-.2) (4.5,.1) (2,-.25) (0.5,0)};
		\draw  plot[smooth, tension=.8, yshift=-3.5cm] coordinates {(6.5,-.2) (4.5,.1) (2,-.25) (0.5,0)};

		\draw plot[xshift=2.75cm, yshift=-2.25cm, rotate=31.5] ({1.6*(\x * \x - 1)}, {1*(\x^3 - \x)});
		\node at (4.75,-2.25) {$C_\r$};
		
		\end{tikzpicture}
	\end{center}
	
	\caption{The curves $C_\r:f(x\one)=\r^2g(x\two)$ form a fibration of $\widehat{K}$. If $f'(\t\one)=0$ and $g'(\t\two)=0$, a curve $C_\r$ passing through a point of the form $(\t\one,\t\two,\r)$ will be singular.}
	\label{mainideadiagram}
\end{figure}

Now suppose $E\one$ and $E\two$ are rank $1$ curves. If the image of a point $(P\one,P\two)\in (E\one\times E\two)(\numfield)$ lies on $C_\r$, then we will be able to generate rational equivalences on $E\one\times E\two$ involving $(P\one,P\two)$ by pulling back principal divisors on $C_\r$. If we can guarantee $C_\r$ is singular (see Figure~\ref{mainideadiagram}), then every degree $0$ divisor not containing the singularity will be principal, giving us the flexibility we need to prove that $E\one\times E\two$ is clean. 

This idea is made precise in Proposition~\ref{PQuusamefiber}. We first state and prove a lemma that we will use to construct divisors.

\begin{lem}\label{lem:ptatinfty}
	For any $\r\in\numfield\setminus\{0\}$, let $\overline{C_\r}\subseteq K$ denote the image of $C_\r$ under the normalization map $\widehat{K}\to K$, and let $A\subseteq \overline{C_\r}$ denote the image of the affine subset $C_\r\cap \bbA^3$. Then $\overline{C_\r}\setminus A$ consists of the single point $\pi(O\one, O\two)$.
\end{lem}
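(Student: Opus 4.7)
The plan is to transfer the computation to the double cover $\pi: E\one \times E\two \to K$ by analyzing the curve
\[
\widetilde{C}_\r := \bigl\{(P\one, P\two) \in E\one \times E\two : y\one(P\one) = \r\, y\two(P\two)\bigr\}.
\]
Let $U \subseteq E\one \times E\two$ be the open subset where $P\one \neq O\one$ and $P\two \neq O\two$. Since $\widehat{\pi}$ sends $(x\one, y\one, x\two, y\two) \mapsto (x\one, x\two, y\one/y\two)$, a direct computation in coordinates shows that $\widehat{\pi}(\widetilde{C}_\r \cap U) = C_\r \cap \bbA^3$, and composing with $\widehat{K} \to K$ yields $\pi(\widetilde{C}_\r \cap U) = A$. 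Since $\pi$ is proper and $C_\r \cap \bbA^3$ is Zariski-dense in $C_\r$, taking closures in $K$ then gives $\pi(\overline{\widetilde{C}_\r}) = \overline{C_\r}$, so that
\[
\overline{C_\r} \setminus A \;\subseteq\; \pi\bigl(\overline{\widetilde{C}_\r} \setminus U\bigr).
\]

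The key step will be to show $\overline{\widetilde{C}_\r} \setminus U = \{(O\one, O\two)\}$. Viewing $y\one - \r y\two$ as a rational function on $E\one \times E\two$ whose zero divisor is (the closure of) $\widetilde{C}_\r$, its only poles lie along $\{O\one\} \times E\two$ and $E\one \times \{O\two\}$, each of order $3$. Hence $\overline{\widetilde{C}_\r}$ can meet $\{O\one\} \times E\two$ only at points where the pole of $y\one$ is cancelled by a simultaneous pole of $y\two$ coming from the $E\two$ factor, i.e., where $P\two = O\two$; and the symmetric statement holds for $E\one \times \{O\two\}$. Combining the two, the only boundary point is $(O\one, O\two)$.

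Putting the pieces together, $\overline{C_\r} \setminus A \subseteq \{\pi(O\one, O\two)\}$. For the reverse inclusion, note that $A$ is affine (it is a closed subvariety of the affine chart of $K$), while $\overline{C_\r}$ is projective (as $C_\r$ is a fiber of the proper morphism $\widehat{K} \to \PP^1$ and $\widehat{K} \to K$ is proper); hence $\overline{C_\r} \setminus A$ is nonempty and equality follows. I expect the main subtlety to be keeping straight the interplay of the three surfaces $E\one \times E\two$, $K$, and $\widehat{K}$ and the two curves $\widetilde{C}_\r$ and $C_\r$ under the various partially defined maps among them; beyond this bookkeeping, the only substantive geometric content is the pole-cancellation argument in the middle paragraph.
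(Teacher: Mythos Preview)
Your proof is correct and takes a genuinely different route from the paper's. The paper argues \emph{downwards}: it uses the closed morphism $\psi:K\to\PP^1\times\PP^1$ given by $(x\one,x\two)$, observes that $\psi(A)$ is the affine plane curve $f(x\one)=\r^2g(x\two)$, and checks directly that its closure in $\PP^1\times\PP^1$ meets the boundary only at $(\infty,\infty)$ when $\r\neq 0$; since $\psi^{-1}(\infty,\infty)=\{\pi(O\one,O\two)\}$, the result follows. You instead argue \emph{upwards}, lifting to $E\one\times E\two$ and analyzing the poles of $y\one-\r y\two$ to pin down the boundary of $\widetilde{C}_\r$. At bottom the two computations are the same degree-$3$ pole-cancellation (your $y_i$ versus the paper's $x_i^3$ at infinity), but the packaging differs. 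The paper's version is a bit leaner, since it avoids tracking the rational map $\widehat{\pi}$ at the $2$-torsion fixed points; your version has the virtue that it already introduces the curve $\pi^{-1}(\overline{C_\r})$ that is used in the very next proposition, so the bookkeeping you do here pays off immediately. One small imprecision worth tightening: the phrase ``the affine chart of $K$'' should be made explicit (e.g.\ the chart $w^2=f(x\one)g(x\two)$ with $w=y\one y\two$), and you should note that $A$ is cut out there by the equations $w=\r\,g(x\two)$ and $f(x\one)=\r^2g(x\two)$, so it really is closed in that chart; the projective-versus-affine argument for nonemptiness then goes through as you say.
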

\begin{proof}
	The closed morphism $E\one\times E\two\to\PP^1\times\PP^1$ given by $(x\one,\,y\one,\,x\two,\,y\two)\mapsto (x\one,\,x\two)$ factors through the surjective map $\pi$, and so we have a closed morphism $\psi:K\to \PP^1\times\PP^1$. The curve $C_\r$ is closed because it is a fiber over a closed point of $\PP^1$, and the normalization map $\widehat{K}\to K$ is closed, so $\psi(\overline{C_\r})$ is closed in $\PP^1\times\PP^1$.
		
	On the other hand, the image of $A$ under $\psi$ is the affine curve defined by $f(x\one)=\r^2g(x\two)$. This is not closed in $\PP^1\times\PP^1$, so $\overline{C_\r}\setminus A$ is nonempty. Since $\r\neq 0$, the closure of $\psi(A)$ in $\PP^1\times\PP^1$ contains no points at infinity other than $(\infty, \infty)$. Hence $\overline{C_\r}\setminus A$ is contained in $\psi^{-1}(\{(\infty, \infty)\})=\{\pi(O\one, O\two)\}$.
\end{proof}

\begin{prop}\label{PQuusamefiber}
	Let $E\one$ and $E\two$ be rank $1$ elliptic curves with Weierstrass equations $y\one^2=f(x\one)$ and $y\two^2=g(x\two)$ respectively, and let $\t\one,\t\two\in\overline{\numfield}$ satisfy $f'(\t\one)=g'(\t\two)=0$. Suppose there exist non-torsion points $P\one\in E\one(\numfield)$ and $P\two\in E\two(\numfield)$ with $f(\t\one)/y\one(P\one)^2=g(\t\two)/y\two(P\two)^2$. Then $E\one\times E\two$ is clean (i.e. $\Phi(\CH^1(E\one)_0\otimes \CH^1(E\two)_0)$ is finite).
\end{prop}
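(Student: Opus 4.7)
The plan is to exploit the rational curve $C_{\r_0}\subset \widehat K$---arising as a singular cubic fiber of Inose's pencil through the image of $(P\one,P\two)$---to produce a key rational equivalence in $\CH^2(E\one\times E\two)$, and combine it with bilinearity of $\Phi$ to force $2\,\Phi(P\one\otimes P\two)=0$. Set $\r_0:=y\one(P\one)/y\two(P\two)\in\numfield^\times$ (nonzero since $P\one,P\two$ are not $2$-torsion). The hypothesis rearranges to $f(\t\one)=\r_0^2 g(\t\two)$, so $(\t\one,\t\two,\r_0)\in C_{\r_0}$; since $f'(\t\one)=g'(\t\two)=0$, both partials of the defining equation $f(x\one)-\r_0^2g(x\two)$ vanish at this point. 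Hence $C_{\r_0}$ is an irreducible plane cubic with a singular point, so it has geometric genus $0$; let $\nu:\PP^1\to C_{\r_0}\hookrightarrow\widehat K$ denote its normalization.

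Any two $\numfield$-points of $\PP^1$ are rationally equivalent as $0$-cycles, so proper pushforward via $\nu$ gives $[\nu(\alpha)]=[\nu(\beta)]$ in $\CH^2(\widehat K)$ for any $\alpha,\beta\in\PP^1(\numfield)$. Choose $\alpha$ with $\nu(\alpha)=\widehat\pi(P\one,P\two)$ and $\beta$ with $\nu(\beta)\in C_{\r_0}\setminus(C_{\r_0}\cap\bbA^3)$; by Lemma~\ref{lem:ptatinfty}, $\nu(\beta)$ lies on the exceptional divisor of $\widehat K\to K$ over $\pi(O\one,O\two)$. Blow up $E\one\times E\two$ at its $16$ two-torsion points to obtain $Y$, so that $\widehat\pi$ extends to a finite degree-$2$ morphism $\widehat\pi:Y\to\widehat K$ ramified along the exceptional divisors. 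Pulling the identity above back to $\CH^2(Y)$ via $\widehat\pi^*$ and pushing forward along the blowdown $Y\to E\one\times E\two$ yields
\[
[(P\one,P\two)]+[(-P\one,-P\two)]=2[(O\one,O\two)]\quad\text{in }\CH^2(E\one\times E\two),
\]
where the left side uses that $(P\one,P\two)$ is not $2$-torsion (so $\widehat\pi^{-1}(\widehat\pi(P\one,P\two))=\{(P\one,P\two),(-P\one,-P\two)\}$ lies off the exceptional divisors) and the right side comes from the degree-$2$ preimage of $\nu(\beta)$ collapsing onto $(O\one,O\two)$. Separately, $\ZZ$-bilinearity of $\Phi$ gives $\Phi((-P\one)\otimes(-P\two))=\Phi(P\one\otimes P\two)$; expanding both sides and applying $[(P\one,Q)]+[(-P\one,Q)]=2[(O\one,Q)]$ (which follows from $[P\one]+[-P\one]\sim 2[O\one]$ on $E\one$), together with its analog on $E\two$, one derives
\[
[(P\one,P\two)]-[(-P\one,-P\two)]=2[(P\one,O\two)]+2[(O\one,P\two)]-4[(O\one,O\two)].
\]
Summing the two displayed identities gives $2\,\Phi(P\one\otimes P\two)=0$, so $\Phi(P\one\otimes P\two)$ is $2$-torsion.

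To upgrade to finiteness of $\Phi(\CH^1(E\one)_0\otimes\CH^1(E\two)_0)$, use the rank-$1$ hypothesis: write $E_i(\numfield)=\ZZ G_i\oplus T_i$ with $T_i$ finite and $P_i=a_iG_i+\tau_i$, $a_i\neq 0$. By bilinearity, $\Phi(P\one\otimes P\two)$ equals $a\one a\two\,\Phi(G\one\otimes G\two)$ modulo contributions involving $\tau_i$, all of which are torsion (since the $T_i$ are finite); since $\Phi(P\one\otimes P\two)$ is torsion, $\Phi(G\one\otimes G\two)$ is too, and $\Phi(E\one(\numfield)\otimes E\two(\numfield))$ is contained in a finite union of shifts of the finite cyclic group generated by $\Phi(G\one\otimes G\two)$, hence is finite. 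The main obstacle is the transfer across the Kummer double cover: one must verify that $C_{\r_0}$ meets only the exceptional divisor of $\widehat K\to K$ over $\pi(O\one,O\two)$ (not any of the other fifteen), and track the ramification of $\widehat\pi$ precisely enough to confirm that $\widehat\pi^*[\nu(\beta)]$ is indeed a degree-$2$ $0$-cycle whose pushforward to $E\one\times E\two$ is exactly $2[(O\one,O\two)]$.
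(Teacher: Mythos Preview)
Your argument is correct and shares the same geometric core as the paper's: the singular fiber $C_{\r_0}$ of Inose's pencil is rational, so the difference of any two of its smooth points is principal, and pulling back through the Kummer quotient yields the key relation $[(P\one,P\two)]+[(-P\one,-P\two)]=2[(O\one,O\two)]$ in $\CH^2(E\one\times E\two)$.

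Two differences in execution are worth noting. First, the paper works in $K$ rather than $\widehat K$: instead of passing to the blowup $Y$ and using flat pullback of $0$-cycles, it simply writes down a rational function $h$ on $\overline{C_\r}\subset K$ with divisor $[\pi(P\one,P\two)]-[\pi(O\one,O\two)]$ (using Lemma~\ref{lem:ptatinfty} to pin down the unique point at infinity) and pulls $h$ back along $\pi$ to the curve $\pi^{-1}(\overline{C_\r})\subset E\one\times E\two$. This sidesteps entirely the ``main obstacle'' you flag in your last paragraph, since one never has to track intersections with the sixteen exceptional divisors or the ramification of $\widehat\pi$ on them. Second, rather than combining a single geometric relation with the bilinearity identity you derive, the paper repeats the geometric step with $C_{-\r}$ (which passes through $\widehat\pi(-P\one,P\two)$ and is singular at $(\t\one,\t\two,-\r)$) to obtain $[(-P\one,P\two)]+[(P\one,-P\two)]=2[(O\one,O\two)]$ directly; subtracting the two relations gives $\Phi(([P\one]-[-P\one])\otimes([P\two]-[-P\two]))=0$. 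Your route via bilinearity is slightly more economical (one fiber instead of two, and it yields $2$-torsion rather than $4$-torsion), while the paper's route is geometrically more transparent and avoids the blowup bookkeeping.

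One minor correction: $C_{\r_0}$ is not literally a plane cubic---the affine equation $f(x\one)=\r_0^2 g(x\two)$ has bidegree $(3,3)$ in $(x\one,x\two)$---but it \emph{is} a generically genus-$1$ fiber of Inose's pencil on $\widehat K$, so a singular point does force geometric genus $0$ as you need.
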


\begin{proof}
	Let $\r=y\one(P\one)/y\two(P\two)$. Since we have $\r^2=y\one(P\one)^2/y\two(P\two)^2=f(\t\one)/g(\t\two)$, the curve $C_\r:f(x\one)=\r^2g(x\two)$ in $\widehat{K}$ contains both $\widehat{\pi}(P\one,P\two)$ and $(\t\one,\,\t\two,\,\r)$. Since $f'(\t\one)=g'(\t\two)=0$, the point $(\t\one,\,\t\two,\,\r)$ is a singularity of $C_\r$. Hence $\overline{C_\r}$, the image of $C_\r$ in $K$, is a rational curve containing $\pi(P\one,P\two)$ and $\pi(O\one,O\two)$. The cycle $[\pi(P\one,P\two)]-[\pi(O\one,O\two)]$ is therefore a principal divisor $(h)$ for some $h$ in the function field of $\overline{C_\r}$; explicitly, in the setup of Lemma~\ref{lem:ptatinfty}, we can take a rational function on the singular affine cubic $\psi(A)$ that has no zeroes or poles aside from a simple zero at $(x\one(P\one),x\two(P\two))$, and pull this back along $\psi$ to obtain $h$. Pulling back $h$ along $\pi$, we obtain a rational function on $\pi^{-1}(\overline{C_\r})$. 
	
	In the same way, $C_{-\r}$ contains both $\widehat{\pi}(-P\one,P\two)$ and the singularity $(\t\one,\,\t\two,\,-\r)$. We conclude that the zero-cycles
	\begin{align*}&[(P\one,P\two)]+[(-P\one,-P\two)]-2[(O\one,O\two)],\\
	&[(-P\one,P\two)]+[(P\one,-P\two)]-2[(O\one,O\two)]
	\end{align*}
	are principal divisors on the curves $\pi^{-1}(\overline{C_\r})$ and $\pi^{-1}(\overline{C_{-\r}})$, respectively. Their difference,
	\begin{align*}
		&[(P\one,P\two)]-[(-P\one,P\two)]-[(P\one,-P\two)]+[(-P\one,-P\two)]\\
		&\qquad=\Phi\left(([P\one]-[-P\one])\otimes ([P\two]-[-P\two])\right),
	\end{align*}
	is therefore zero in $\CH^2(E\one\times E\two)$.
	
	Now take any $D\one\in\CH^1(E\one)_0$ and $D\two\in \CH^1(E\two)_0$. Since $E\one$ has rank $1$, there will exist integers $n\neq 0$ and $m$ such that $nD\one=m([P\one]-[O\one])$; using the rational equivalence $[P\one]+[-P\one]-2[O\one]=0$ in $\CH^1(E\one)$, we have $2nD\one=m([P\one]-[-P\one])$. Likewise, $2n' D\two$ will be a multiple of $[P\two]-[-P\two]$ for some nonzero integer $n'$, and so $4nn'\Phi(D\one\otimes D\two)$ is zero in $\CH^2(E\one\times E\two)$. Since $\CH^1(E\one)_0\otimes\CH^1(E\two)_0$ is finitely generated by elements of the form $D\one\otimes D\two$, this proves $E\one\times E\two$ is clean.
\end{proof}

Although Proposition~\ref{PQuusamefiber} applies as long as $\t\one,\t\two\in\overline{\numfield}$, from the next section onward we will assume $\t\one,\t\two\in\numfield$. The following result deals with the remaining cases.

\begin{lem}
	Assume the conditions of Proposition~\ref{PQuusamefiber}, and further assume that either $\t\one$ or $\t\two$ is not in $\numfield$. Then $E\one$ and $E\two$ are isomorphic over $\numfield$.
\end{lem}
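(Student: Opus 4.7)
The plan is to reduce the hypothesis of Proposition~\ref{PQuusamefiber} to a single linear relation inside the compositum $\numfield(\t\one,\t\two)$, and then turn a careful bookkeeping of scalars into an explicit Weierstrass isomorphism.

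First, I would use $\chr\numfield\neq 2,3$ to write $f(x)=x^3+ax+b$ and $g(x)=x^3+a'x+b'$. The condition $f'(\t\one)=0$ forces $\t\one^2=-a/3$, so $f(\t\one)$ collapses to $(2a/3)\t\one+b$; likewise $g(\t\two)=(2a'/3)\t\two+b'$. Since $P\one$ and $P\two$ are non-torsion (hence not $2$-torsion), the quantities $\eta\one:=y\one(P\one)$ and $\eta\two:=y\two(P\two)$ lie in $\numfield^{*}$. The hypothesis $f(\t\one)/\eta\one^{2}=g(\t\two)/\eta\two^{2}$ then becomes the single relation
\[
\eta\two^{2}\bigl(\tfrac{2a}{3}\t\one+b\bigr)\;=\;\eta\one^{2}\bigl(\tfrac{2a'}{3}\t\two+b'\bigr). \qquad (\ast)
\]

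Next, I would show both $\t_i\notin\numfield$ and then $\numfield(\t\one)=\numfield(\t\two)$. If $\t\two\in\numfield$ while $\t\one\notin\numfield$, the right-hand side of $(\ast)$ lies in $\numfield$, so the $\t\one$-coefficient $\eta\two^{2}(2a/3)$ on the left must vanish, forcing $a=0$ and hence $\t\one=0\in\numfield$, a contradiction. So both $\t_i\notin\numfield$ and in particular $a,a'\neq 0$. If further $\numfield(\t\one)\neq\numfield(\t\two)$, then $\numfield(\t\one,\t\two)/\numfield$ is biquadratic with $\numfield$-basis $\{1,\t\one,\t\two,\t\one\t\two\}$, and comparing the coefficient of $\t\one$ in $(\ast)$ again gives $a=0$, absurd. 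Therefore $\t\two=\epsilon c\,\t\one$ for some $c\in\numfield^{*}$ and $\epsilon\in\{\pm 1\}$, and squaring yields $a'=c^{2}a$.

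Substituting $\t\two=\epsilon c\,\t\one$ into $(\ast)$ and separating the $\t\one$-coefficient from the constant produces $\eta\two^{2}=\epsilon c^{3}\eta\one^{2}$ and $b'=\epsilon c^{3}b$. Setting $d:=\epsilon c$, these read $a'=d^{2}a$ and $b'=d^{3}b$, which a priori only expresses $E\two$ as the quadratic twist of $E\one$ by the class of $d$ in $\numfield^{*}/(\numfield^{*})^{2}$. The step that upgrades this to a $\numfield$-isomorphism is the identity $d^{3}=\epsilon c^{3}=(\eta\two/\eta\one)^{2}$, which places $d^{3}\in(\numfield^{*})^{2}$; since $\numfield^{*}/(\numfield^{*})^{2}$ has exponent $2$, multiplying by the square $d^{-2}$ shows $d$ itself is a square, say $d=e^{2}$ with $e\in\numfield^{*}$. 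Then $(a',b')=(e^{4}a,\,e^{6}b)$, and the map $(x,y)\mapsto(e^{2}x,e^{3}y)$ is the desired $\numfield$-isomorphism $E\one\xrightarrow{\sim}E\two$.

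The main obstacle I anticipate is precisely this last step: without the observation that a cube and its base agree modulo squares, the argument only concludes that $E\one$ and $E\two$ are quadratic twists of each other, which is strictly weaker than $\numfield$-isomorphic. The leverage to close the gap comes from the $\numfield$-rationality of the non-torsion points $P\one,P\two$, which is what forces $(\eta\two/\eta\one)^{2}\in(\numfield^{*})^{2}$ from the outset.
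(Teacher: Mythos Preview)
Your argument is correct and follows essentially the same route as the paper: both reduce the hypothesis to the single linear relation $(\ast)$ inside the quadratic extension $\numfield(\t\one)=\numfield(\t\two)$ and then separate it into a ``$\t\one$-part'' and a constant part. The only cosmetic difference is the endgame---the paper sets $d=a\one/(a\two r)$ with $r=y\one(P\one)/y\two(P\two)$ and checks directly that $a\one=d^{4}a\two$ and $b\one=d^{6}b\two$, whereas you first obtain $a'=d^{2}a$, $b'=d^{3}b$ and then use $d^{3}=(\eta\two/\eta\one)^{2}\in(\numfield^{*})^{2}$ to force $d$ itself to be a square.
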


\begin{proof}
	Letting $f(x\one)=x\one^3+a\one x\one+b\one$ and $g(x\two)=x\two^3+a\two x\two+b\two$, the conditions $f'(\t\one)=g'(\t\two)=0$ imply $a\one=-3\t\one^2$ and $a\two=-3\t\two^2$. Letting $\r=y\one(P\one)/y\two(P\two)\in\numfield$, the condition $\r^2=f(\t\one)/g(\t\two)$ then implies
	\[\frac{2a\one\t\one}{3}+b\one=\r^2\left(\frac{2a\two\t\two}{3}+b\two\right).\]
	If $\t\one\notin\numfield$ or $\t\two\notin\numfield$, then we can conclude that $\numfield(\t\one)=\numfield(\t\two)$ is a quadratic extension of $\numfield$, with a Galois automorphism acting by $\t\one\mapsto -\t\one$ and $\t\two\mapsto -\t\two$. This means we must have $a\one\t\one=\r^2a\two\t\two$ (which, when squared, implies $a\one^3=\r^4a\two^3$) and $b\one=\r^2b\two$. If we set $d=\frac{a\one}{a\two \r}$, these equations imply $a\one=d^4a\two$ and $b\one=d^6b\two$; that is, $E\one$ and $E\two$ are isomorphic over $\numfield$.
\end{proof}

\section{Parametrization}\label{parametrization}

We seek to parameterize pairs of elliptic curves satisfying the conditions of Proposition~\ref{PQuusamefiber}. By the following lemma, we can assume that these curves have a particular form.

\begin{lem}
	Let $E$ be an elliptic curve with Weierstrass equation $y^2=f(x)$, $\t\in\numfield$ such that $f'(\t)=0$, and $P\in E(\numfield)$ non-torsion. Set $\s:=f(\t)/y(P)^2$. Then $E$ is isomorphic to the curve $E_{\s,\t}$ with Weierstrass equation
	\[y^2=f_{\s,\t}(x):=x^3-3\t ^2x+2\t ^3+\left(1-\s-3\t\right)^2 \s.\]
	Further, we have $f_{\s,\t}'(\t)=0$, and the point $P_{\s,\t}:=(1-\s-2\t ,\, 1-\s -3\t )\in E_{\s,\t}(\numfield)$ is a non-torsion point satisfying $\s=f_{\s,\t}(\t)/y(P_{\s,\t})^2$.	
\end{lem}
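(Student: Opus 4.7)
The plan is to verify the ``Further'' assertions by direct computation and then construct an explicit isomorphism $E \to E_{\s,\t}$ by rescaling coordinates.

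For the ``Further'' portion: the derivative $f_{\s,\t}'(x) = 3x^2 - 3\t^2$ visibly vanishes at $\t$. The other two claims follow from the polynomial identity $x^3 - 3\t^2 x + 2\t^3 = (x-\t)^2(x+2\t)$, which holds because $\t$ is a double root of this monic cubic (the polynomial and its derivative both vanish at $\t$). Substituting $x = 1-\s-2\t$, so that $x - \t = 1-\s-3\t$ and $x + 2\t = 1-\s$, gives
\[
f_{\s,\t}(1-\s-2\t) = (1-\s-3\t)^2(1-\s) + (1-\s-3\t)^2\s = (1-\s-3\t)^2 = y(P_{\s,\t})^2,
\]
so $P_{\s,\t} \in E_{\s,\t}(\numfield)$. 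Setting $x = \t$ yields $f_{\s,\t}(\t) = (1-\s-3\t)^2 \s$, and dividing by $y(P_{\s,\t})^2$ recovers $\s$.

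For the isomorphism $E \cong E_{\s,\t}$: the hypothesis $f'(\t) = 0$ forces $f(x) = x^3 - 3\t^2 x + b$ for some $b \in \numfield$, and the defining formula for $\s$ rearranges to $b = 2\t^3 + \s\, y(P)^2$. Since $P$ is non-torsion, $y(P) \neq 0$, so $u := (x(P) - \t)/y(P) \in \numfield^\times$ is well-defined, and I would consider the rescaling $\phi : (x, y) \mapsto (u^2 x,\, u^3 y)$. The crucial auxiliary identity, obtained by applying the factorization above at $x = x(P)$, is $y(P)^2 - f(\t) = (x(P) - \t)^2(x(P) + 2\t)$. Combined with $b = 2\t^3 + \s\, y(P)^2$, this identity makes the rescaled Weierstrass equation match $f_{\s,\t}$ exactly and sends $P$ to the specified point $P_{\s,\t}$; non-torsion of $P_{\s,\t}$ is then automatic since $\phi$ is an isomorphism.

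The principal obstacle is the bookkeeping in the rescaling step: once $u$ is chosen as above, both the equality $\phi^\ast(y^2 = f_{\s,\t}(x)) = (y^2 = f(x))$ and the equality $\phi(P) = P_{\s,\t}$ unwind from the single factorization identity together with the definition of $\s$.
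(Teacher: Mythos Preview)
Your overall strategy coincides with the paper's: verify the ``Further'' assertions directly, then rescale by $u=(x(P)-\t)/y(P)$. The direct verifications are fine, but the isomorphism step has two gaps.

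First, you claim $u\in \numfield^\times$ because $y(P)\neq 0$; this only controls the denominator. If $x(P)=\t$ then $u=0$ and $\phi$ is not invertible. The paper disposes of this case first: when $x(P)=\t$, the tangent line at $P$ is horizontal (since $f'(\t)=0$), so $-2P$ has $x(-2P)=-2\t\neq \t$ while $y(-2P)^2=y(P)^2$; replacing $P$ by $-2P$ therefore preserves $\s$ and reduces to the case $x(P)\neq \t$.

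Second, and more seriously, your assertion that the rescaled equation ``matches $f_{\s,\t}$ exactly'' does not survive a check. The substitution $(x,y)\mapsto(u^2x,u^3y)$ carries $y^2=x^3-3\t^2x+b$ to $y^2=x^3-3(u^2\t)^2x+u^6b$, so the role of $\t$ is taken over by $u^2\t$; matching the coefficient of $x$ with that of $f_{\s,\t}$ for the \emph{original} $\t$ would force $u^4=1$, which fails generically. What the rescaling actually produces---and what the paper's own proof establishes---is $E\cong E_{\s,\,u^2\t}$, with the image of $P$ (or of $-2P$, after the replacement above) equal to $P_{\s,\,u^2\t}$. The lemma as printed is slightly imprecise on this point (when the paper later applies it in the proof of Corollary~\ref{infwithE}, the target is correctly written as $E_{\s,\,d^2\t}$), but your proposal asserts the literal claim without carrying out the bookkeeping, and that bookkeeping does not close.
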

\begin{proof}
	Note that $f$ is uniquely determined by $\t$ and $P$:
	\[f(x)=x^3-3\t^2x+(y(P)^2-x(P)^3+3\t^2 x(P)).\]
	If $x(P)=\t$, then $-2P=(-2x(P),y(P))$ is again a non-torsion point, and $\t$ and $-2P$ determine the same curve and the same value of $\s$ as $t$ and $P$. So replacing $P$ with $-2P$ if necessary, we may assume without loss of generality that $x(P)\neq \t$. 
	
	For any nonzero $d\in\numfield$, the substitution $(x,y)\mapsto (d^2x,d^3y)$ results in an isomorphic curve, determined by $d^2\t$ and $(d^2x(P),d^3y(P))$, and $\s$ is preserved. By setting $d=\frac{x(P)-\t}{y(P)}$, we can assume without loss of generality that $y(P)=x(P)-\t$. 

	Now from $f(\t)=\s y(P)^2$ we obtain
	\begin{align*}
		y(P)^2-x(P)^3+3\t^2 x(P) = 2\t^3+\s y(P)^2,
	\end{align*}
	or rearranging,
	\[(1-\s)y(P)^2=(x(P)-\t )^2(x(P)+2\t ).\]
	By the assumption $y(P)=x(P)-\t$, this simplifies to $x(P)=1-\s-2\t$.
\end{proof}

This leads us to the elliptic curve $\scE$ over $\numfield(\S,\T)$ defined in the introduction, which has a distinguished point $\scP\in\scE(\numfield(\S,\T))$:
\begin{align*}
\scE :y^2&=(x-\T )^2(x+2\T )+\left(1-\S -3\T\right)^2\S ,\\
\scP &:=\left(1-\S-2\T ,\, 1-\S -3\T \right).
\end{align*}
To complete the proof of Theorem~\ref{cleanthm}, it suffices to determine the pairs of specializations of $\scE$ that satisfy the conditions of Proposition~\ref{PQuusamefiber}.

\begin{proof}[Proof of Theorem~\ref{cleanthm}]
	The equation for $\scE$ also defines a hypersurface $\widetilde{\scE}$ in $\PP_\numfield^2(x,y)\times\bbA_\numfield^2(S,T)$. Let $\zaropen_0$ be the Zariski-open subset of $\bbA_\numfield^2$ on which the discriminant
	\[\Delta(\scE)=-432 S (1 - S - 3 T)^2 (4 T^3 + (1 - S - 3 T)^2 S)\]
	is nonzero; this is nonempty as long as $\chr\numfield\neq 2,3$. Then the fiber of the projection $\widetilde{\scE}\to\bbA_\numfield^2$ over a point $(\s,\t)\in\zaropen_0(\numfield)$ will be an elliptic curve $E_{\s,\t}$ over $\numfield$.
	
	Each element of $\scE(\numfield(\S,\T))$ determines a section $\bbA_\numfield^2\to \widetilde{\scE}$. Let $\widetilde{\scO}$ denote the image of the zero section $\bbA_\numfield^2\to\widetilde{\scE}$, and for each integer $\ell\geq 1$, let $\widetilde{\ell\scP}$ denote the image of the section associated to $\ell\scP$. Pulling back $\widetilde{\scO}\cap\widetilde{\ell\scP}$ along the zero section, we obtain a closed subvariety $\scZ_\ell$ of $\bbA_\numfield^2$, where a point $(\s,\t)\in\zaropen_0(\numfield)$ is in $\scZ_\ell(\numfield)$ if and only if $\ell P_{\s ,\t}$ is the identity of $E_{\s,\t}$. The point $\scP$ is not itself torsion (one way to see this is to specialize to $\S=1$ and show that the canonical height is nonzero; this computation is carried out in Appendix~\ref{proofgenrank1}), so $\scZ_\ell$ is not all of $\bbA_\numfield^2$. Its complement, which we denote $\zaropen_\ell$, is therefore a non-empty Zariski-open subset.
	
	By our hypothesis of uniform boundedness for torsion, there exists an integer $L$ such that if $P_{\s ,\t}$ is torsion in $E_{\s ,\t}$ for any $(\s,\t)\in\zaropen_0(\numfield)$, it must have order $1\leq\ell\leq L$. Hence, the finite intersection 
	\[\zaropen := \bigcap_{\ell=0}^L\zaropen_\ell\]
	is a non-empty Zariski-open set such that $P_{\s ,\t}$ is non-torsion for all $(\s,\t)\in \zaropen(\numfield)$.
	
	Suppose we take any $(\s,\t\one),(\s,\t\two)\in\zaropen(\numfield)$ such that $E_{\s,\t\one}$ and $E_{\s,\t\two}$ are rank $1$ curves. By definition of $\zaropen$, the points $P\one=P_{\s,\t\one}$ and $P\two=P_{\s,\t\two}$ will not be torsion, and we will have $f(\t\one)/y\one(P\one)^2=\s=g(\t\two)/y\two(P\two)^2$. We also have $f'(\t\one)=g'(\t\two)=0$ directly from the definition. Hence, by Proposition~\ref{PQuusamefiber}, $E_{\s,\t\one}\times E_{\s,\t\two}$ is clean.
\end{proof}

\section{Infinitely Many Clean Pairs?}\label{infpairs}

For each $\s\in\numfield\setminus\{0\}$, let $\scE_\s$ be the curve over $\numfield(\T)$ obtained from $\scE$ by evaluating the indeterminate $\S$ at $\s$. In this section we prove Corollary~\ref{infwithE}: assuming that none of the curves $\scE_\s$ have elevated rank, then for any rank $1$ elliptic curve $E$ satisfying certain conditions, it will form a clean pair with infinitely many $E'$ that are non-isomorphic over $\overline{\numfield}$.  

We begin by defining what it means for a curve over $\numfield(\T)$ to have elevated rank. After a brief discussion of this phenomenon, we will proceed with a proof of Corollary~\ref{infwithE}.

\begin{defn}\label{defn:elevatedrank}
	Let $\scF$ be an elliptic curve over $\numfield(\T)$, and let $F_\t$ denote the specialization of $\scF$ at $\T=\t$. We say that $\scF$ \emph{has elevated rank} if for all but finitely many $\t\in\numfield$, the rank of $F_\t(\numfield)$ is strictly greater than the rank of $\scF(\numfield(\T))$. 
\end{defn}

\begin{rmk}\label{rmk:noelevatedrank}
	Let us consider the phenomenon of elevated rank over various fields $\numfield$. Conrad, Conrad, and Helfgott~\cite{conradconrad} describe examples of curves with elevated rank over $\QQ(\T)$, but point out that all known examples are isotrivial (the $j$-invariant is constant). In fact, assuming the parity, density, squarefree-value, and Chowla conjectures, they prove that every curve over $\QQ(\T)$ with elevated rank must be isotrivial. In contrast, they construct examples of nonisotrivial curves of elevated rank over $\numfield(\T)$, for $\numfield$ a field of positive characteristic. These examples depend very strongly on the characteristic being nonzero; as the authors mention, ``the failure of Chowlaâ€™s conjecture in positive characteristic was our initial clue to the possibility that elevated rank may occur in nonisotrivial families in the function field case''~\cite[36]{conradconrad}. Hence we suspect that there should be no nonisotrivial curves of elevated rank over $\numfield(\T)$ when $\numfield$ is a number field. Since $\scE_\s$ is nonisotrivial for all $\s\neq 0$, the parity, density, squarefree-value, and Chowla conjectures imply that $\scE_\s$ never has elevated rank when $\numfield=\QQ$, and it is plausible that $\scE_\s$ never has elevated rank over any number field.
\end{rmk}

\begin{proof}[Proof of Corollary~\ref{infwithE}]
	Let $E$ be a rank $1$ elliptic curve of the form $y^2=x^3-3\t ^2x+b$, such that there is no torsion point in $E(\numfield)$ with $x$-coordinate equal to $t$. If $b-2\t^3=r^2$ for some $r\in\numfield$, set $P=(-2\t,r)$ (which will be non-torsion by assumption); if $b-2\t^3$ is not in $\numfield^2$, let $P\in E(\numfield)$ be any non-torsion point. By the techniques of Section~\ref{parametrization}, if we set $\s =\frac{b-2\t ^3}{y(P)^2}$ and $d=\frac{x(P)-\t}{y(P)}$, then we will have $E\cong E_{\s ,d^2\t}$ (with $P$ corresponding to $P_{\s,d^2\t}$), and $(\s,d^2\t)\in\zaropen(\numfield)$.
	
	Now consider the curve $\scE_\s$. By our definition of $\s$ and choice of $P$, we either have $\s=1$ (if $b-2\t^3=r^2$) or $\s$ is not in $\numfield^2$. We will apply the following rank computation, which can be found immediately after the current proof.
	\begin{prop}\label{genrank1}
		The group $\scE_\s(\numfield(\T))$ has rank $1$ for all $\s \in\numfield\setminus\numfield^2$ and for $\s =1$, and has rank $2$ for $\s \in\numfield^2\setminus\{0,1\}$. 
	\end{prop}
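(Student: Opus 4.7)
The plan is to translate the rank computation into a question about the associated elliptic surface and apply Shioda--Tate. For each $\s \neq 0$, let $X_\s \to \PP^1_\numfield$ be the smooth minimal elliptic surface with generic fiber $\scE_\s$. From the discriminant
\[\Delta(\scE_\s) = -432\,\s\,(1-\s-3\T)^2\,\bigl(4\T^3 + \s(1-\s-3\T)^2\bigr),\]
together with the contribution at $\T=\infty$ (computed via $\T = 1/\T'$, $x = x'/\T'^2$, $y = y'/\T'^3$), the total $\T$-multiplicity of $\Delta$ on $\PP^1$ equals $12$. Hence $\chi(X_\s) = 1$, so $X_\s$ is rational elliptic, its geometric Picard number is $10$, and the Shioda--Tate formula gives
\[\rank \scE_\s(\overline{\numfield}(\T)) \;=\; 8 - \sum_v (m_v - 1),\]
summed over singular fibers $v$ with $m_v$ components.

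The next step is Tate's algorithm on $(c_4, c_6, \Delta)$ with $c_4 = 144\T^2$ and $c_6 = -864\bigl(2\T^3 + \s(1-\s-3\T)^2\bigr)$. For $\s \ne 0, 1$, the cubic $4\T^3 + \s(1-\s-3\T)^2$ has nonzero discriminant $-432\s^2(1-\s)^3$, yielding three simple roots and three $I_1$ fibers; the double factor $(1-\s-3\T)^2$ gives an $I_2$ at $\T = (1-\s)/3$ (where $c_4 \neq 0$); at infinity $(v(c_4), v(c_6), v(\Delta)) = (2, 3, 7)$, identifying an $I_1^*$ fiber. Summing, $\sum_v(m_v - 1) = 1 + 5 = 6$, so the geometric rank is $2$. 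For $\s = 1$, the two factors share a double root at $\T = 0$, producing a $IV$ fiber there ($(v(c_4), v(c_6), v(\Delta)) = (2, 2, 4)$), plus an $I_1$ at $\T = -9/4$ and the same $I_1^*$ at infinity; the sum is $2 + 5 = 7$, so the geometric rank is $1$.

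For the lower bounds, $\scP$ is non-torsion on each $\scE_\s$ (by the canonical height computation below), giving rank $\geq 1$ always. When $\s = r^2 \in \numfield^2 \setminus \{0\}$, the point $\scQ := (\T, r(1-\s-3\T))$ lies in $\scE_\s(\numfield(\T))$ and is non-torsion --- one sees this via Shioda's height formula on $X_\s$, or by noting that the tangent at $\scQ$ is horizontal (so $2\scQ = (-2\T, -r(1-\s-3\T))$) and ruling out small torsion orders. For $\s \in \numfield^2 \setminus \{0, 1\}$, one then needs $\ZZ$-independence of $\scP, \scQ$ modulo torsion, which can be verified by specializing $\T$ to a value $t_0 \in \numfield$ and checking that the specializations remain independent in $E_{\s, t_0}(\overline{\numfield})$; combined with the upper bound $2$, this gives rank $2$.

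Finally, for $\s \notin \numfield^2$ with $\s \ne 0$, $\scQ$ is only defined over $\numfield(\sqrt{\s})(\T)$. Over $\overline{\numfield}(\T)$ the pair $\scP, \scQ$ generates a rank-$2$ subgroup, and the nontrivial $\sigma \in \Gal(\numfield(\sqrt{\s})/\numfield)$ fixes $\scP$ but sends $\scQ \mapsto -\scQ$. Any Galois-fixed class $m\scP + n\scQ$ then satisfies $2n\scQ \in \scE_\s(\overline{\numfield}(\T))_{\mathrm{tors}}$, forcing $n = 0$; so $\scE_\s(\numfield(\T))$ has rank $1$. The main obstacle is the $\ZZ$-independence check for $\s$ a square, which reduces to a single well-chosen specialization or a height-pairing computation on $X_\s$; the Tate's algorithm fiber classification and the Galois descent are routine once the discriminant has been factored.
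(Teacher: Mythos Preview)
Your outline is essentially the paper's proof: Shioda--Tate on the rational elliptic surface for the upper bound, Tate's algorithm to classify the singular fibers, the points $\scP$ and $\scQ$ for the lower bound, and the Galois descent for $\s\notin\numfield^2$. Your fiber list ($I_1^3+I_2+I_1^*$ for $\s\neq 0,1$; $IV+I_1+I_1^*$ for $\s=1$) is exactly equivalent to the paper's count $8-\sum\val_\t(\Delta)+\#R+\#R_a$, just phrased via Kodaira types rather than the multiplicative/additive dichotomy.

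The one substantive point you defer is the $\ZZ$-independence of $\scP$ and $\scQ$ (needed both for the rank-$2$ case and for the finite-index input to the Galois argument). The paper carries this out by a uniform local-height computation: for every $\s\neq 0,1$ one gets $\widehat{h}(\scP)=\tfrac14$, $\widehat{h}(\scQ)=\tfrac18$, $\widehat{h}(\scP+\scQ)=\tfrac38$, so $\scP$ and $\scQ$ are orthogonal and hence independent. Your alternative of ``a single well-chosen specialization'' is not adequate as stated, since $\s$ ranges over all of $\numfield^2\setminus\{0,1\}$ (or $\numfield\setminus\numfield^2$) in an arbitrary field, and no single specialization of $\T$ can certify independence for all such $\s$ at once; the height computation is the right uniform tool here, and it is what the paper does.
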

	\noindent	
	By this result, $\scE_\s(\numfield(\T))$ has rank $1$ for all $\s$ we are considering. A theorem of Silverman tells us that there are only finitely specializations of $\scE_\s$ that have rank lower than the generic rank~\cite[Theorem C]{silverman_heights}, and we are assuming that $\scE_\s$ does not have elevated rank, so there are infinitely many $\t'\in\numfield$ such that $E_{\s,\t'}$ has rank equal to $1$.
	
	The restriction of the Zariski-open $\zaropen$ to the line $\S=\s$ in $\mathbb{A}_\numfield^2$ is nonempty (it contains $E_{\s ,d^2\t}$), so $(\s,\t')\in\zaropen(\numfield)$ for all but finitely many $\t'\in\numfield$. Hence, by Theorem~\ref{cleanthm}, there are infinitely many $E_{\s,\t'}$ with rank $1$ that will form a clean pair with $E$. The $j$-invariant of $\scE_\s$,
	\[j(\scE_\s)=\frac{6912T^6}{s(1-s-3T)^2(s(1-s-3T)^2+4T^3)},\]
	is a nonconstant rational function in $T$, so any given $j$-invariant in $\numfield$ is attained by a specialization of $\scE_\s$ only finitely many times. Thus one can find infinitely many $E_{\s,\t'}$ as above with pairwise distinct $j$-invariants.
\end{proof}

\begin{proof}[Proof of Proposition~\ref{genrank1}]
	Let $\overline{\scE_\s}$ denote the base change of $\scE_\s$ to $\overline{\numfield}(\T)$, and let $\widetilde{\scE_\s}$ denote the minimal elliptic surface over $\PP^1_{\overline{\numfield}}$ associated to $\overline{\scE_\s}$. Since $\widetilde{\scE_\s}$ is a rational elliptic surface (for instance by~\cite[Remark 1.3.1]{rosensilverman}) over an algebraically closed field, a special case of the Shioda-Tate Theorem \cite[Theorem 10.3]{shioda90} tells us that the rank of $\overline{\scE_\s}(\overline{\numfield}(\T))$ will equal $8-\sum_{\t \in\badred} (m_\t -1)$, where $\badred$ is the set of places of bad reduction, and $m_\t$ is the number of irreducible components of the fiber at $\T=\t$. Let $\Delta$ denote the discriminant of $\overline{\scE_\s}$, and let $\val_\t(\Delta)$ denote the valuation of $\Delta$ at $\T=\t$. For each $\t\in\badred$, $m_\t$ will either equal $\val_\t(\Delta)$ (if the fiber has multiplicative reduction) or $\val_\t(\Delta)-1$ (if the fiber has additive reduction)~\cite[Equation (13)]{schuttshioda}. So if we let $\badred_a$ be the set of places with additive reduction, we obtain the formula
	\[\rank\overline{\scE_\s}(\overline{\numfield}(\T))=8-\left(\sum_{\t \in\badred} \val_\t(\Delta)\right)+\#\badred+\#\badred_a.\]
	We compute each of these terms in Appendix~\ref{proofgenrank1}; the rank will be $8-12+3+2=1$ for $\s=1$, and $8-12+5+1=2$ for $\s\neq 1$. These are upper bounds for the rank of $\scE_\s(\numfield(\T))$.
	
	We then consider the points in $\scE_\s(\overline{\numfield}(\T))$ given by
	\begin{align*}
	\caP&:=\left(1-\s -2\T ,\, 1-\s -3\T \right),\\
	\caQ&:=\left(\T ,\, (1-\s -3\T )\sqrt{\s }\right).
	\end{align*}
	If $\s=1$, then $\caP$ is non-torsion by a height computation (Appendix~\ref{proofgenrank1}) and so $\rank\scE_1(\numfield(\T))=1$ (note that $\caP=-2\caQ$). If $\s\neq 1$, on the other hand, we show that these two points are independent by computing their height pairing matrix (Appendix~\ref{proofgenrank1}), so they generate a finite-index subgroup of $\scE_\s(\overline{\numfield}(\T))$. If in addition $\s\in\numfield^2$, so $\caP$ and $\caQ$ are both defined over $\numfield$, then $\rank\scE_\s(\numfield(\T))=2$. 
	
	Now suppose $\s\notin\numfield^2$. Then $\caP$ is fixed by all Galois automorphisms, but there is an automorphism that sends $\caQ\mapsto -\caQ$. Given any $\caT\in\scE_\s(\overline{\numfield}(\T))$, we will have $\ell \caT=m\caP+n\caQ$ for some integers $\ell,m,n$ with $\ell\neq 0$. If $\caT$ (and hence $\ell \caT$) is fixed by the Galois action, then $m\caP+n\caQ=m\caP-n\caQ$, which implies $n=0$ because $\caQ$ is non-torsion. Therefore any $\caT\in\scE_\s(\numfield(\T))$ is linearly dependent with $\caP$, proving that $\scE_\s(\numfield(\T))$ has rank $1$.
\end{proof}

\section{Examples}\label{examples}

\subsection{Generating Curves in a Subfamily}

While we do not know how to rule out the possibility that $\scE_\s$ has elevated rank, we can easily compute lists of curves in this family that can be used to generate clean pairs. For example, set $\numfield=\QQ$ and consider specializations of $\scE_1$. At each  $\t=\frac{p}{q}\in\QQ$, an integral model for the fiber at $\t$ is given by
\[y^2=(x-pq)^2(x+2pq)+9p^2q^4.\]
Define the height of this curve to be
\[h(\t):=\max\{(3p^2q^2)^3,\,(2p^3q^3+9p^2q^4)^2\}.\]
Now fix some bound $H$; for each $\t\in\QQ$ with $h(\t)\leq H^6$, we check to see whether the discriminant is nonzero, and whether the point $(-2pq,-3pq^2)$ is non-torsion (guaranteeing that $(1,\t)\in\zaropen(\numfield)$). If so, we record the rank of the corresponding curve. The data is summarized in Table \ref{Eabdata}. In particular, the $27062$ rank $1$ curves found here all have $\s=1$, and so any two of them will form a clean pair.

The density conjecture~\cite[Appendix A]{conradconrad} predicts that $100\%$ of curves in this family have ranks $1$ or $2$, so the increasing proportion of rank $3$ curves in Table~\ref{Eabdata} may be concerning. However, it is likely that this trend reverses for large enough values of $H$, with the proportion of rank $3$ curves eventually decreasing to $0$.\footnote{In an analogous setting, Zagier considered all curves of the form $x^3+y^3=m$ with $m\leq 70000$, and found $38.3\%$ with rank $0$, $48.9\%$ with rank $1$, $11.7\%$ with rank $2$, and $1.1\%$ with higher rank~\cite{zagierk}; once we account for the difference in generic rank, the similarity to Table~\ref{Eabdata} is striking. However Watkins later extended the data to all $m\leq 10^7$ to show that the proportion of curves with rank $\geq 2$ appears to decay after a sufficiently long time~\cite{watkins}.}

\begin{table}
	\begin{center}
		\small
		\begin{tabular}{|c|c|c|c|c|c|c|}
			\hline
			$H$ & Total & rank $1$ & rank $2$ & rank $3$ & rank $\geq 4$ & rank ? \\
			\hline
			$10$ & 823 & 465 (56.5\%) & 339 (41.2\%) & 19 (2.3\%) & 0 (0\%) & 0 \\
			$20$ & 4710 & 2115 (44.9\%) & 2263 (48.0\%) & 332 (7.0\%) & 0 (0\%) & 0 \\
			$30$ & 13055 & 5363 (41.0\%) & 6418 (49.2\%) & 1242 (9.5\%) & 32 (0.2\%) & 0 \\
			$40$ & 26828 & 10512 (39.2\%) & 13140 (48.9\%) & 3063 (11.4\%) & 113 (0.4\%)  & 0\\
			$50$ & 46956 & 17573 (37.4\%) & 23121 (49.2\%) & 5994 (12.8\%) & 258 (0.5\%) & 10 \\
			$60$ & 74069 & 27062 (36.6\%) & 36378 (49.1\%) & 10087 (13.6\%) & 523 (0.7\%) & 19\\
			\hline
		\end{tabular}
	\end{center}
	\caption{Distribution of ranks among elliptic curves $y^2=(x-\t )^2(x+2\t)+9\t^2$ with $h(\t)\leq H^6$, such that $(-2\t,-3\t)$ is non-torsion.}
	\label{Eabdata}
\end{table}

\subsection{Curves with Small Conductor}

Consider the $683$ elliptic curves of rank $1$ with conductor up to $500$ (using Cremona's Tables~\cite{cremona}). When put into reduced Weierstrass form, $89$ of them satisfy the conditions of Corollary~\ref{infwithE} ($91$ have the form $y^2=x^3-3\t^2x+b$, and of these, there are $2$ for which $b-2\t^3=r^2$ and $(-2\t,r)$ is torsion); the first four of these have Cremona references 43a1, 65a1, 89a1, and 99a1. In particular, there are $16$ for which $b-2\t^3$ is a square,\footnote{43a1,
	112a1,
	135a1,
	153a1,
	155c1,
	216a1,
	225e1,
	236a1,
	248a1,
	252b1,
	280a1,
	304c1,
	308a1,
	364b1,
	387c1, and
	400c1.} 
so that we can take $\s=1$ for each of them; this gives us $256$ clean pairs.

The two rank $1$ curves of smallest conductor are 37a1 and 43a1. Despite 37a1 not appearing in the family $\scE$, we can use alternative techniques to prove that (37a1, 43a1) is a clean pair. Namely, pick non-torsion points $P\one,P\two$ on each, and consider the curve $C_\r:f(x\one)=\r^2g(x\two)$ passing through $\widehat{\pi}(P\one,P\two)$ as in Section~\ref{kummerrational}. This will be a genus $1$ curve, so we can use elliptic curve computations to find a principal divisor on $\overline{C_\r}$ relating $\pi(P\one,P\two)$ to the images of fixed points of $\iota$. As before, when we pull back to obtain principal divisors on $\pi^{-1}(\overline{C_\r})$ and $\pi^{-1}(\overline{C_{-\r}})$ and take their difference, the fixed points of $\iota$ will cancel, leaving us with a nonzero multiple of $\Phi\left(([P\one]-[-P\one])\otimes ([P\two]-[-P\two])\right)$. This technique (and others) will be discussed in more depth in the author's forthcoming thesis~\cite{myphd}; using these methods we can prove the cleanness of several pairs of curves that are not accounted for by Theorem~\ref{cleanthm}.\footnote{For example, of the $\binom{10}{2}=45$ pairs of rank $1$ curves with conductor below $80$, we can show that the seven pairs (37a1, 43a1), (37a1, 57a1), (37a1, 77a1), (53a1, 58a1), (61a1, 65a1), (61a1, 65a2), and (65a2, 79a1) are clean.}

However, there are still many pairs of rank $1$ curves which we have not been able to prove are clean, including for example (37a1, 53a1) and (43a1, 53a1).

\appendix
\section{Computations for Proposition~\ref{genrank1}}\label{proofgenrank1}

\begin{table}
	\begin{minipage}{12.5cm}
		\setcounter{mpfootnote}{\value{footnote}}
		\renewcommand{\thempfootnote}{\arabic{mpfootnote}}
		
		\caption{Computing the rank of $\overline{\scE_\s}(\overline{\numfield}(\T))$.}\label{rankcomps}
		\[\small\def\arraystretch{1.5}
		\begin{array}{|r|c|c|}
		\multicolumn{3}{c}{\text{Case }\s=1}\\
		\hline
		\text{model} & y^2=x^3-3\T ^2x+2\T ^3+9\T^2 & y'^2=x'^3-3\T'^2x'+2\T'^3+9\T'^4\\
		\Delta &-3888 \T^4 (9 + 4 \T) & -3888\T'^7(4 + 9\T' )\\
		\hline
		\caP & (-2\T,-3\T) & (-2\T',-3\T'^2)\\
		\hline
		\end{array}\]
		
		\[\small\def\arraystretch{1.5}
		\begin{array}{|r|c|c|c|c||c|}
		\hline
		\T & \t\notin\badred &0 & -\frac94 & \infty & \sum_{\t}\\
		\hline
		\val_\t(\Delta) & 0 & 4 & 1 & 7 & 12\\
		\text{reduction} & & y^2=x^3 &y^2=\left(x - \frac94\right)^2 \left(x + \frac92\right) & y'^2=x'^3 & \\
		\text{type} & \text{good} & \text{additive} & \text{multiplicative} & \text{additive} & \\
		\caP_\t & \text{smooth} & \text{singular} & \text{smooth} & \text{singular} & \\
		\lambda_\t(\caP) & 0 & 0\quad(\footnote{$\val_0(F_2)=2$ and $\val_0(F_3)=6$, so $\lambda_0(\caP)=-2/6+4/12$.}) & \frac1{12} & \frac1{12}\quad (\footnote{$\val_\infty(F_2)=4$ and $\val_\infty(F_3)=8$, so $\lambda_\infty(\caP)=-8/16+7/12$.}) & \frac16\\
		\hline
		\end{array}\]
		
		\begin{align*}\small\def\arraystretch{1.5}
		\begin{array}{|r|c|c|}
		\multicolumn{3}{c}{\text{Case }\s\neq 1}\\
		\hline
		\text{model} & {\scriptstyle y^2=x^3-3\T ^2x+2\T ^3+\left(1-\s-3\T\right)^2 \s} & {\scriptstyle y^2=x^3-3\T'^2x+2\T'^3+\T'^4\left(\T'-\s\T'-3\right)^2 \s}\\
		\Delta &{\scriptstyle -432\s (1-\s -3\T )^2(4 \T ^3 + (1-\s -3\T )^2 \s )} & {\scriptstyle -432\s\T'^7 (\T'-\s\T' -3 )^2(4 + (\T'-\s\T' -3)^2 \s\T' )}\\
		\hline
		\caP & \left(1-\s -2\T ,\, 1-\s -3\T \right) & \left((1-\s)\T'^2 -2\T' ,\, (1-\s)\T'^3 -3\T'^2 \right)\\
		\caQ & \left(\T ,\, (1-\s -3\T )\sqrt{\s }\right) & \left(\T' ,\, ((1-\s)\T'^3 -3\T'^2)\sqrt{\s }\right)\\
		\caP+\caQ & \hspace{-2.5cm}\big(\T-2(\sqrt{\s }-\s) , & \hspace{-2.5cm}\big(\T'-2(\sqrt{\s }-\s)\T'^2 , \vspace{-2pt}\\
		&\hspace{1cm} (3\T-(4\sqrt{\s}-3\s -1))\sqrt{\s }\big) &\hspace{1cm} (3\T'^2-(4\sqrt{\s}-3\s -1)\T'^3)\sqrt{\s }\big)\\
		\hline
		\end{array}\end{align*}
		
		\[\small\def\arraystretch{1.5}
		\begin{array}{|r|c|c|c|c||c|}
		\hline
		\t & 
		\t\notin\badred &
		\frac{1-\s}{3} & 
		r_1,r_2,r_3\quad (\footnote{the roots of $4 \t ^3 + (1-\s -3\t )^2 \s =0$. The polynomial $(1-\s -3\T)(4 \T^3 + (1-\s -3\T)^2 \s )$ in $\T$ has discriminant $6912 (\s -1)^9 \s ^2\neq 0$, so these are distinct from each other and from $\frac{1-\s}{3}$.}) & 
		\infty & 
		\sum_{\t}\\
		
		\hline
		\val_\t(\Delta) & 0 & 2 & 1 & 7 & 12\\
		
		\text{reduction} & & y^2=(x-\t)^2(x+2\t) &y^2=(x+\t)^2(x-2\t) & y^2=x^3 & \\
		
		\text{type} & \text{good} & \text{multiplicative} & \text{multiplicative} & \text{additive} & \\
		
		\caP_\t & \text{smooth} & \text{singular} & \text{smooth}\footnote{$2y\neq 0$ at all $\T\neq \frac{1-\s}{3}$.} & \text{singular} & \\
		
		\lambda_\t(\caP) & 0 & -\frac1{12}\quad(\footnote{$\val_\t(2y)=1$, so $\alpha=\frac12$.}) & 
		\addtocounter{mpfootnote}{-3}
		\frac1{12}  & \frac1{12}\quad(\footnotemark[\value{mpfootnote}]) & \frac14 \\
		\addtocounter{mpfootnote}{2}
		
		\caQ_\t & \text{smooth} & \text{singular} & \text{smooth}\footnotemark[\value{footnote}] & \text{singular} & \\
		\stepcounter{mpfootnote}
		
		\lambda_\t(\caQ) & 0 & -\frac1{12}\quad(\footnotemark[\value{mpfootnote}]) \stepcounter{mpfootnote} & \frac1{12}  & -\frac1{24}\quad(\footnote{$\val_\infty(F_2)=4$ and $\val_\infty(F_3)=10$, so $\lambda_\infty=-10/16+7/12$.}) & \frac18 \\	
		
		(\caP+\caQ)_\t & \text{smooth} & \text{smooth}\footnote{The roots of $3x^2-3\T^2$ and $2y$ are distinct when $\s\neq 1$.} &
		\text{smooth}\footnotemark[\value{mpfootnote}] & \text{singular} & \\
		
		\addtocounter{mpfootnote}{-1}
		\lambda_\t(\caP+\caQ) & 0 & \frac1{6} & \frac1{12} & -\frac1{24}\quad (\footnotemark[\value{mpfootnote}]) & \frac38\\
		\hline
		\end{array}\]
		
	\end{minipage}
\end{table}

We consider two minimal models of $\overline{\scE_\s}$: the original Weierstrass equation, and the equation obtained by the substitution $(x,y,\T)=\left(\frac{x'}{\T'^2},\frac{y'}{\T'^3},\frac{1}{\T'}\right)$ (for studying the fiber at $\infty$). We calculate the discriminant of each model; the places of bad reduction will be determined by where the discriminant vanishes. At each place, we compute the valuation of $\Delta$ and the reduction type of $\overline{\scE_\s}$. We then compute local heights of certain points using Silverman's algorithm, as described in exercises 6.7 and 6.8 of~\cite{silverman_adv}. The results of these computations are included in Table~\ref{rankcomps}, with occasional footnotes describing how the computation was done.

These computations give us the following results:
\begin{itemize}
	\item The point $\caP\in\scE_1(\numfield(\T))$ is non-torsion, because its canonical height is $\frac16$. This is used in Section~\ref{parametrization} to prove that $\scP$ is non-torsion in $\scE(\numfield(\S,\T))$, and in the proof of Proposition~\ref{genrank1} to show $\scE_1(\numfield(\T))$ has rank $1$.
	\item $\sum_{\t\in R}\val_\t(\Delta)=12$. This, together with the classification of places with bad reduction, allows us to compute the rank of $\overline{\scE_\s}(\overline{\numfield}(\T))$ in the proof of Proposition~\ref{genrank1}.
	\item The canonical heights of $\caP$, $\caQ$, and $\caP+\caQ$ on $\scE_\s$ for $\s\neq 0,1$ are $\frac14$, $\frac18$, and $\frac38$ respectively. This proves that $\caP$ and $\caQ$ are non-torsion, and since $\widehat{h}(\caP)+\widehat{h}(\caQ)=\widehat{h}(\caP+\caQ)$, they are orthogonal under the height pairing. In particular, $\caP$ and $\caQ$ are linearly independent, which is used in the proof of Proposition~\ref{genrank1} to prove that they generate a finite-index subgroup of $\overline{\scE_\s}(\overline{\numfield}(\T))$.
\end{itemize}

\pagebreak

\printbibliography

\end{document}